\newtheorem{theorem}{Theorem}
\newtheorem{proposition}[theorem]{Proposition}
\newtheorem{example}[theorem]{Example}
\newtheorem{remark}[theorem]{Remark}
\newtheorem{defn}[theorem]{Definition}
\newtheorem{algorithm}[theorem]{Algorithm}
\def\C{\mathcal{C}}
\def\U{\mathcal{U}}
\def\K{\mathbb{K}}
\def\F{\mathbb{F}}
\title{Computing Hypercircles by Moving Hyperplanes}
\author{Luis Felipe Tabera}
\begin{document}
\maketitle

\begin{abstract}
Let $\mathbb{K}$ be a field of characteristic zero, $\alpha$ algebraic of degree
$n$ over $\mathbb{K}$. Given a proper parametrization $\psi$ of a rational curve
$\C$, we present a new algorithm to compute the hypercircle associated to the
parametrization $\psi$. As a consequence, we can decide if $\C$ is defined over
$\mathbb{K}$ and, if not, to compute the minimum field of definition of
$\mathcal{C}$ containing $\K$. The algorithm exploits the conjugate curves of
$\C$ but avoids computation in the normal closure of $\mathbb{K}(\alpha)$ over
$\mathbb{K}$.
\end{abstract}

\section{Introduction}
Let $\mathbb{K}(\alpha)$ be a computable characteristic zero field with
factorization such that $\K$ is finitely generated over $\mathbb{Q}$ as a field
and $\alpha$ is of degree $n$ over $\K$.

Let $\psi(t) = (\psi_1(t), \ldots, \psi_m(t))$ be a proper parametrization of a
rational spatial curve $\C$, where $\psi_i\in \mathbb{K}(\alpha)(t)$, $1\leq
i\leq m$. The reparametrization problem ask for methods to decide in $\C$ is
defined or parametrizable over $\mathbb{K}$ and, if possible, compute a
parametrization of $\C$ over $\K$.

In \cite{ARS99}, the authors proposed a construction to solve this problem
introducing a family of curves called hypercircles and avoiding any
implicitization technique. Starting from the parametrization $\psi$, they
construct an analog to Weil descente variety to compute a curve $\mathcal{U}$
called the \emph{witness variety} or the \emph{parametric variety of Weil}. This
curve exists if and only if $\C$ is defined over $\K$ and we can obtain a
parametrization of $\C$ with coefficients in $\K$ easily from a parametrization
of $\mathcal{U}$ with coefficients in $\K$. Efficient algorithms to compute a
parametrization of $\mathcal{U}$ with coefficients in $\K$ are studied in
\cite{Algorithmic_detection_of_HC}, provided we are able to find a point in
$\mathcal{U}$ with coefficients over $\mathbb{K}$.

The definition of $\mathcal{U}$ is done under a parametric version of Weil's
descente method. In the proper parametrization $\psi=(\psi_1, \ldots, \psi_m)$,
$\psi_i \in \mathbb{K}(\alpha)(t)$ with coefficients in $\K(\alpha)$, we
substitute $t=\sum_{i=0}^{n-1} \alpha^it_i$, where $t_0, \ldots, t_{n-1}$ (where
$n$ is the degree of $\alpha$ over $\mathbb{K}$). 

We can rewrite:
\[\psi_j\left(\sum_{i=0}^{n-1} \alpha^it_i\right)= \sum_{i=0}^{n-1} \alpha^i
\lambda_{ij}(t_0,\ldots,t_{n-1}), \lambda_{ij}= \frac{F_{ij}}{D} \in
\K(t_0,\ldots, t_{n-1})\]
In this context we have the following definition:

\begin{defn}\label{defn:witness}
The parametric variety of Weil $\mathcal{Z}$ of the parametrization $\psi$ is he
Zariski closure of
\[\{F_{ij}=0\ |\ 1\leq i\leq n-1,\ 1\leq j\leq N\} \setminus \{D=0\}\subseteq
\F^n.\]
\end{defn}

Much is known about $\mathcal{Z}$, it is always a set of dimension $0$ or $1$.
It is of dimension one exactly in the case that $\mathcal{C}$ is defined over
$\K$ (See \cite{ARS99}, \cite{Ultraquadrics}). In this case, $\mathcal{Z}$
contains exactly one component of dimension $1$ that is the searched curve
$\mathcal{U}$. 

The computation of the curve $\mathcal{U}$ from its definition is unfeasible
except for toy examples. The curve $\mathcal{U}$ is defined as the unique one
dimensional component of a the difference of two varieties
$\mathcal{A}-\mathcal{B}$. This already is a hard enough problem to look for
alternatives, but this method also uses huge polynomials. If $\psi_i(t) =
n_i(t)/d(t)$ and $d=d(\alpha, t)\in \mathbb{K}[\alpha,t]$. Let $M(x)$ be the
minimal polynomial of $\alpha$ over $\mathbb{K}$. In the generic case, the
denominator $D$ is $D=Res_z(d(z,\sum_{i=0}^{n-1}z^it_i), M(z))$ which is
typically a dense polynomial of degree $dn$ in $n$ variables. Hence, the number
of terms of the polynomial $D$ alone is not polynomially bounded in $n$.

The aim of the article is to present an algorithm to compute the variety
$\mathcal{U}$ that is polynomial in $d$ and $n$ and, if $\C$ is not defined over
$\K$, to compute the smallest field $\mathbb{L}$, $\K\subseteq
\mathbb{L}\subseteq \K(\alpha)$ that defines $\C$. The article is structured as
follows. First we introduce in Section~\ref{sec:syntetic_construction} the
geometric construction that will allow us to derive an efficient 
algorithm. Then, we show
in Section~\ref{sec:efficient_computation} how to compute efficiently some steps
of the algorithm. Last, in Section~\ref{sec:running}, we study the complexity of
the algorithm and some running times comparing with other approaches.

\section{Synthetic construction of
Hypercircles}\label{sec:syntetic_construction}

The problem of parametrizing $\C$ over $\K$ can be translated to the problem of
parametrizing $\mathcal{U}$. In the case that $\C$ can be parametrized over
$\mathbb{K}$, then $\mathcal{U}$ is a very special curve called hypercircle.

\begin{defn}\label{defn:hypercircle}
Let $\frac{at+b}{ct+d}\in \K(\alpha)(t)$ represent an isomorphism of $\F(t)$,
$a,b,c,d\in \K(\alpha)$, $ad-bc\neq 0$. Write
\[\frac{at+b}{ct+d}=\lambda_0(t)+\alpha \lambda_1(t)+ \cdots+
\alpha^{n-1}\lambda_{n-1}(t)\] where $\lambda_i(t)\in \K(t)$. The
\textit{hypercircle} associated to $\frac{at+b}{ct+d}$ for the extension
$\K\subseteq \K(\alpha)$ is the parametric curve in $\F^n$ given by the
parametrization $(\lambda_0,\ldots, \lambda_{n-1})$.
\end{defn}

If $\C$ cannot be parametrized over $\K$ and $\K$ is small enough (that means
that it is finitely generated over $\mathbb{Q}$ as a field, that we can always
assume without loss of generality), then there always exists an element $\beta$
algebraic of degree 2 over $\K$ such that $[\K(\beta, \alpha):\K(\alpha)] = n$
and $\C$ can be parametrized over $\K(\beta)$, see
\cite{Affine_reparametrization} for the details. In this situation $\U$ is a
hypercircle for the extension $\mathbb{K}(\beta) \subseteq
\mathbb{K}(\alpha,\beta)$. That is, there is an associated unit
$\frac{at+b}{ct+d}$, but with $a,b,c,d\in \mathbb{K}(\beta)$.

Thus, the curve $\mathcal{U}$ is always a hypercircle for certain algebraic
extension. So all the geometric properties of hypercircles studied in
\cite{Generalizing-circles} hold for $\mathcal{U}$ except, maybe, the existence
of a point in $\mathcal{U}\cap \mathbb{K}^n$. We will exploit the geometric
properties of hypercircles to derive our algorithm. We start with the fact that
$\mathcal{U}$ is always a rational normal curve in $\mathbb{F}^n$ defined over
$\mathbb{K}$ (See \cite{Generalizing-circles}) and the synthetic construction of
rational normal curves as presented in \cite{harris}. 

Let us recall the construction of conics by a pair of pencil of lines. Let
$\mathfrak{L}(t)$ and $\mathfrak{F}(t)$ be two different pencils of lines in the
plane with two different base points $l_0 \neq f_0$ and let $C$ be a conic
passing trough $l_0$ and $f_0$. Then, $C$ induces an isomorphism  $u :
\mathfrak{L}(t)\rightarrow \mathfrak{F}(t)$ given by extending the map
$u(\mathfrak{L}(t_0)) = \mathfrak{F}(s_0)$ if
\[\mathfrak{L}(t_0) \cap C -\{l_0\} = \mathfrak{F}(s_0)\cap C-\{f_0\}.\]

Conversely, an isomorphism $u$ between $\mathfrak{L}(t)$ and $\mathfrak{F}(t)$
defines a line or a conic passing through the base points. There is a proper
parametrization of this curve given by $t\mapsto \mathfrak{L}(t) \cap
\mathfrak{F}(u(t))$.

\begin{example}\label{ex:pencil_of_lines}
Let $\mathcal{C}=x^2+y^2-1$ be the unit circle. And take the pencils of lines
that passes through the points at infinity of the circle $[1:i:0], [1:-i:0]$.
$\mathfrak{L}(t) = \{x+iy = t\}$, $\mathfrak{F}(t) = \{x-iy = t\}$. In this case
$\C \cap \mathfrak{L}(t) = (\frac{t^{2} + 1}{2 t}, \frac{-i t^{2} + i}{2 t})$
and $\C \cap \mathfrak{F}(t) = (\frac{t^{2} + 1}{2 t}, \frac{i t^{2} - i}{2
t})$. In this case, the isomorphism between the pencils is given by $u(t) =
1/t$. Now, let us take the isomorphism $u(t)=(t+i)/t$. Then, the conic defined
by $u$ from the two pencils of lines is $x^2 + y^2 - x -iy -i$. Which is a conic
passing through the base points, although not defined over $\mathbb{Q}$. 
\end{example}

More generally, the same geometric construction applies to rational normal
curves of degree $n>2$ in $\mathbb{F}^n$ as explained in \cite{harris}. We only
show the special case of this construction that is relevant for hypercircles. If
$\mathcal{U}$ is a hypercircle, it is known that $\mathcal{U}$ can be
parametrized by the pencil of hyperplanes $\mathfrak{L}_0 = \{\sum_{i=0}^{n-1}
\alpha^i x_i = t\}$ \cite{Generalizing-circles}. This pencil of hyperplanes
yield to a proper parametrization $\phi=(\phi_0(t),\ldots, \phi_{n-1}(t))$ of
the hypercircle with coefficients in $\K(\alpha)$ that is called the
\emph{standard parametrization} of the hypercircle and has been studied with
detail in \cite{Algorithmic_detection_of_HC}. Since the hypercircle is always a
curve defined over $\K$, it is invariant under conjugation and it can also be
parametrized by the conjugate pencil of hyperplanes.

Let us fix some notation. Let $\alpha=\alpha_0, \alpha_1, \ldots, \alpha_{n-1}$,
the conjugates of $\alpha$ over $\K$ in $\F$. Let $\sigma_i$, $0\leq i\leq
{n-1}$ be $\mathbb{K}$-automorphisms of $\mathbb{F}$ such that
$\sigma_i(\alpha)=\alpha_i$ and $\sigma_0=Id$. If we have a rational function
$f(t)\in\mathbb{K}(\alpha)(x_1, \ldots, x_r)$, we denote by $f^{\sigma_j}=
\sigma_j(f)\in \mathbb{K}(\alpha_j)(x_1, \ldots, x_r)$ that results applying
$\sigma_j$ to the coefficients of $f$. If $\C$ is the original curve, then we
denote by $\C^\sigma$ the conjugate curve $C^\sigma = \{\sigma(x)| x \in C\}$,
where $\sigma(x)$ is applied component-wise. $\C^\sigma$ is clearly a rational
curve with proper parametrization $\psi^\sigma$.

It is known \cite{Ultraquadrics} that $\C$ is defined over $\K$ if and only if
$\C=\C^{\sigma_i}$ $1\leq i\leq n-1$ if and only if $\psi^{\sigma_i}$
parametrizes $\C$, $1\leq i\leq n-1$.

The conjugate pencil of hyperplanes $\mathfrak{L}_j(t) =
\{ \sum_{i=0}^{n-1} \alpha_j^i x_i = t \}$, $1\leq j\leq n-1$ also parametrizes
$\mathcal{U}$, yielding the conjugate parametrization
$\phi^{\sigma_j}(t)=\sigma_j(\phi(t))$.

The hypercircle then induces an isomorphism $u_j(t)$ between $\mathfrak{L}_0(t)$
and $\mathfrak{L}_j(t)$ given by $(\mathfrak{L}_0(t_0) \cap \U) - H =
(\mathfrak{L}_j(u_j(t_0)) \cap \U)-H$ for all but finitely many parameters
$t_0$, where $H$ is the hyperplane at infinity of $\mathbb{P}(\F)^n$. So
$\phi(t) = \phi^{\sigma_j}(u_j(t))$, from which $u_j(t) =
(\phi^{\sigma_j})^{-1}\circ \phi$. But, by construction,
$(\phi^{\sigma_j})^{-1}(x_0,\ldots, x_{n-1})=\sum_{i=0}^{n-1}\alpha_j^i x_i$ and
$u_j(t) = \sum_{i=0}^{n-1} \alpha_j^i \phi_i(t)$. Conversely, a set of
isomorphisms $u_j: \mathfrak{L}_0(t) \rightarrow \mathfrak{L}_j(t)$, $0\leq
j\leq n-1$, $u_0(t)=t$, defines a rational normal curve given by $t\rightarrow
\bigcap_{i=0}^{n-1} \mathfrak{L}_j(u_j(t))$. So, we can recover the standard
parametrization of the hypercircle if we know the isomorphisms $u_j$, $0\leq
j\leq n-1$, where $u_0(t)=t$. The standard parametrization $\phi$ is the unique
solution of the Vandermonde linear system of equations:
\begin{equation}\label{eq:Vandermonde_standard}\begin{pmatrix}
 1 & \alpha & \ldots & \alpha^{d-1}\\
 1 & \alpha_2 & \ldots & \alpha_2^{d-1}\\
 \multicolumn{4}{c}{\ldots}\\
 1 & \alpha_n & \ldots & \alpha_n^{d-1}\\
 \end{pmatrix}
 \begin{pmatrix}
 \phi_0 \\ \phi_1\\ \ldots \\ \phi_{n-1}
 \end{pmatrix}
 =
 \begin{pmatrix}
 u_0(t) = t \\ u_1(t)\\ \ldots \\ u_{n-1}(t)
 \end{pmatrix}
\end{equation}
with coefficients on the normal closure of $\alpha$ over $\mathbb{K}$.

As in the planar case, if the automorphisms are generic enough, the curve
$\mathcal{U}$ will be of degree $n$. In this case we say that $\mathcal{U}$ is a
\emph{primitive} hypercircle. There may be cases in which the curve
$\mathcal{U}$ is of degree less than $n$. If this is the case, the degree of
$\mathcal{U}$ must be a divisor of $n$ and is related with the field of
definition of the place of $\C$ corresponding to $\psi(t=\infty)$, as showed in
\cite{Affine_reparametrization}.

The good news is that we can compute easily the automorphisms $u_j(t)$ from the
parametrization $\psi(t)$ alone.

\begin{theorem}\label{teo:howto_compute_u}
Let $\psi(t)\in \mathbb{\K}(\alpha)(t)^m$ be a proper parametrization of $\C$
and assume that $\C$ is defined over $\K$. Let $\phi(t)$ be the standard
parametrization of the associated hypercircle $\mathcal{U}$. Let $\sigma_i$ be a
$\mathbb{K}$-automorphism of $\mathbb{F}$. Let $\phi^{\sigma_i} =
\sigma_i(\phi)$, $\psi^{\sigma_i} = \sigma_i(\psi)$ be the conjugate
parametrizations and $u_{\sigma_i}(t) = (\phi^{\sigma_i})^{-1} \circ \phi$ be
the conjugation isomorphism induced by $\mathcal{U}$ in the pencil of
hyperplanes $\mathfrak{L}_0$ and $\mathfrak{L}_{\sigma_i}$. Then $u_{\sigma_i} =
(\psi^{\sigma_i})^{-1}\circ \psi$.
\end{theorem}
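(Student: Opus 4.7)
The plan is to show that both $(\psi^{\sigma_i})^{-1}\circ\psi$ and $u_{\sigma_i}$ coincide with a single Möbius transformation built from a reparametrization $h$ obtained by factoring $\psi$ through a $\K$-rational parametrization of $\C$.

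First I would factor $\psi$. Since $\C$ is rational and defined over $\K$, there is a proper parametrization $\widetilde\psi\in\K(t)^m$ of $\C$. Setting $h=\widetilde\psi^{-1}\circ\psi\in\K(\alpha)(t)$ gives a Möbius transformation (as a birational self-map of $\mathbb{A}^1$) with $\psi=\widetilde\psi\circ h$. Applying $\sigma_i$ and using that $\widetilde\psi$ has $\K$-coefficients yields $\psi^{\sigma_i}=\widetilde\psi\circ h^{\sigma_i}$, so
\[
(\psi^{\sigma_i})^{-1}\circ\psi \;=\; (h^{\sigma_i})^{-1}\circ h.
\]

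Next I would identify $\phi$ in terms of $h$. Decompose $h^{-1}$ in the $\K(t)$-basis $\{1,\alpha,\ldots,\alpha^{n-1}\}$ of $\K(\alpha)(t)$ as $h^{-1}(t)=\sum_{k=0}^{n-1}\alpha^k\mu_k(t)$ with $\mu_k\in\K(t)$. By Definition~\ref{defn:hypercircle} applied to $h^{-1}$, the curve $\U$ is the hypercircle parametrized by $(\mu_0,\ldots,\mu_{n-1})$. The standard parametrization is characterized by $\sum_k\alpha^k\phi_k(t)=t$, and a direct check shows $\phi_k=\mu_k\circ h$ does the job: $\sum_k\alpha^k\mu_k(h(t))=h^{-1}(h(t))=t$. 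The proof then closes by applying $\sigma_i$ to the identity $h^{-1}(t)=\sum_k\alpha^k\mu_k(t)$, using that each $\mu_k\in\K(t)$ is $\sigma_i$-invariant, to obtain $(h^{\sigma_i})^{-1}(t)=\sum_k\alpha_i^k\mu_k(t)$. Combined with $\phi_k=\mu_k\circ h$ and the formula $u_{\sigma_i}(t)=\sum_k\alpha_i^k\phi_k(t)$ noted immediately before the theorem,
\[
u_{\sigma_i}(t) \;=\; \sum_{k=0}^{n-1}\alpha_i^k\mu_k\bigl(h(t)\bigr) \;=\; (h^{\sigma_i})^{-1}\bigl(h(t)\bigr),
\]
which matches the right-hand side of the first displayed equation.

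The hard part will be the middle step: pinning down that the hypercircle $\U$ arising from the Weil descent of $\psi$ is the one associated by Definition~\ref{defn:hypercircle} to $h^{-1}$ (and not $h$), and verifying the explicit formula $\phi=\mu\circ h$ for the standard parametrization. Once that structural identification is in place, the remainder is formal bookkeeping with the action of $\sigma_i$ on $\K$- versus $\K(\alpha)$-coefficients, and both sides collapse to $(h^{\sigma_i})^{-1}\circ h$.
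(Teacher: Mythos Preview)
Your argument has a genuine gap at the very first step. You write ``Since $\C$ is rational and defined over $\K$, there is a proper parametrization $\widetilde\psi\in\K(t)^m$ of $\C$,'' but this does not follow: being \emph{defined} over $\K$ is strictly weaker than being \emph{parametrizable} over $\K$. A rational curve defined over $\K$ admits a $\K$-parametrization only when it has a $\K$-rational smooth point (equivalently, a $\K$-rational place), and the paper explicitly treats the case where this fails: just before the theorem it explains that when $\C$ cannot be parametrized over $\K$ one must pass to a quadratic extension $\K(\beta)$, and $\U$ is then a hypercircle for $\K(\beta)\subseteq\K(\alpha,\beta)$ rather than for $\K\subseteq\K(\alpha)$. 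In that situation no $\widetilde\psi$ with $\K$-coefficients exists, so your factorization $\psi=\widetilde\psi\circ h$ with $\widetilde\psi$ $\sigma_i$-invariant is unavailable, and with it the identity $(\psi^{\sigma_i})^{-1}\circ\psi=(h^{\sigma_i})^{-1}\circ h$. Attempting a repair with $\widetilde\psi\in\K(\beta)(t)^m$ does not help either: $\sigma_i$ need not fix $\beta$, so $\widetilde\psi^{\sigma_i}\neq\widetilde\psi$ and the $\mu_k$ are no longer $\sigma_i$-invariant, which breaks both displayed equalities simultaneously.

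The paper's proof avoids parametrizability altogether. It invokes the birational map $\U\to\Delta\subset\C\times\C^{\sigma_1}\times\cdots\times\C^{\sigma_{n-1}}$ from \cite{Ultraquadrics}, valid whenever $\C$ is merely defined over $\K$, and reads off directly that $\psi(\sum_j\phi_j\alpha^j)=\psi^{\sigma_i}(\sum_j\phi_j\alpha_i^j)$ along $\U$; since $\sum_j\phi_j\alpha^j=t$ and $\sum_j\phi_j\alpha_i^j=u_{\sigma_i}(t)$, the conclusion follows. Your route, restricted to the $\K$-parametrizable case, is a valid and more elementary alternative that bypasses the Weil-descent identification you flag as ``the hard part''; but as written it does not cover the full hypothesis of the theorem.
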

\begin{proof}
We identify $\C$ with the diagonal curve $\Delta$ in the variety $\C\times
\C^{\sigma_1} \times \ldots \times \C^{\sigma_{n-1}}$, $\Delta = \{(x,\ldots,
x)| x\in \C\}$ the hypercircle $\mathcal{U}$ is a curve such that the map 

\[\begin{matrix}\U &\rightarrow &\C\times \C^{\sigma_1}\times \ldots \times
\C^{\sigma_{n-1}} \\
(x_0,\ldots, x_{n-1}) & \mapsto &
\displaystyle{\left(\psi(\sum_{j=0}^{n-1}x_j\alpha^j),
\psi^{\sigma_1}(\sum_{j=0}^{n-1}x_j \alpha_1^j), \ldots,
\psi^{\sigma_{n-1}}(\sum_{j=0}^{n-1}x_j\alpha_{n-1}^j)\right)}
\end{matrix}\]

Is a birational map between $\U$ and $\Delta = \C$. See \cite{Ultraquadrics}
for the details. This means that $\psi(\sum_{j=0}^{n-1}x_j\alpha^j) =
\psi^{\sigma_i}(\sum_{j=0}^{n-1}x_j\alpha_i^j)$ for the points of the
hypercircle. If we plug the standard parametrization of the hypercircle in this
equality, we get that
\[\psi(t) = \psi(\sum_{j=0}^{n-1} \phi_j\alpha^j) =
\psi^{\sigma_i}(\sum_{j=0}^{n-1}\phi_j\alpha_i^j)=
\psi^{\sigma_i}(u_{\sigma_i}(t))\]
From which $u_{\sigma_i} = (\psi^{\sigma_i})^{-1}\circ \psi$.
\end{proof}

Hence the isomorphism $u_j$ induced by the hypercircle in the pencil of
hyperplanes $\mathfrak{L}(t)$ and $\mathfrak{L}_j(t)$ is the change of variables
needed to transform the conjugate parametrization $\psi^{\sigma_j}(t)$ into
$\psi(t)$. We can compute $u_j$ using $\gcd$.

\begin{theorem}\label{teo:automorphism_by_gcd}
Let $\psi_i(t) = n_i(t) / d_i(t)$ and
$\psi_i^{\sigma_j}(t)=n_i^{\sigma_j}(t)/d_i^{\sigma_j}(t)$ be the numerators and
denominators of $\psi$ and $\psi^j$. Then, if $\C$ is defined over $\K$, the
numerator of $s - u_j(t)$ is a polynomial of degree 1 in $t$ and in $s$ that is
the common factor of the set of polynomials
\[B_{\sigma_j}=\{n_i(t)\cdot d_i^{\sigma_j}(s) - n_i^{\sigma_j}(s)\cdot d_i(t),
1\leq i\leq m\}.\]
On the other hand, if $\C$ is not defined over $\K$, there is an index $1\leq
j\leq n-1$ such that $\gcd(B_{\sigma_j})=1$.
\end{theorem}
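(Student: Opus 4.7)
The plan is to interpret the polynomials in $B_{\sigma_j}$ geometrically via the factorization
\[
n_i(t)\,d_i^{\sigma_j}(s) - n_i^{\sigma_j}(s)\,d_i(t)
=d_i(t)\,d_i^{\sigma_j}(s)\bigl(\psi_i(t)-\psi_i^{\sigma_j}(s)\bigr),
\]
so that, away from the polar sets of $\psi$ and $\psi^{\sigma_j}$, the simultaneous zero set of $B_{\sigma_j}$ is exactly $\{(t,s):\psi(t)=\psi^{\sigma_j}(s)\}$. Every common factor of $B_{\sigma_j}$ must then cut out a subvariety of this locus.

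For the case in which $\C$ is defined over $\K$, I would invoke Theorem~\ref{teo:howto_compute_u} to get the identity $\psi(t)=\psi^{\sigma_j}(u_j(t))$, which immediately shows that the numerator $L(s,t)$ of $s-u_j(t)$ divides every polynomial in $B_{\sigma_j}$. Writing $u_j(t)=(at+b)/(ct+d)$ with $ad-bc\neq 0$ gives $L(s,t)=(ct+d)s-(at+b)$, bilinear in $(s,t)$ and irreducible (a factor depending only on $t$ would force $ct+d$ and $at+b$ to share a root, and $ad-bc\neq 0$ forbids this; symmetrically for factors in $s$ alone). To see that $L$ is the full gcd, I would argue that an irreducible component of the common zero set distinct from $V(L)$ would contain a generic point $(t_0,s_0)$ with $\psi^{\sigma_j}(s_0)=\psi(t_0)=\psi^{\sigma_j}(u_j(t_0))$ and $s_0\neq u_j(t_0)$, violating properness of $\psi^{\sigma_j}$. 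A short derivative computation along the transversal $\partial/\partial s$, together with $\psi^{\sigma_j}$ being non-constant, then rules out $L^2$ dividing every element of $B_{\sigma_j}$, so $\gcd(B_{\sigma_j})=L$ up to a scalar.

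For the case in which $\C$ is not defined over $\K$, I would pick an index $j$ with $\C\neq\C^{\sigma_j}$, which exists by the characterization recalled before Theorem~\ref{teo:howto_compute_u}. Then $\C$ and $\C^{\sigma_j}$ are distinct irreducible curves, so $\C\cap\C^{\sigma_j}$ is finite. A nontrivial common factor $g(s,t)$ involving both variables would trace out an infinite family of pairs $(t,s)$ with $\psi(t)=\psi^{\sigma_j}(s)\in \C\cap\C^{\sigma_j}$, a contradiction; a factor depending on only one of the variables is ruled out by the reducedness of the fractions defining $\psi$ together with $\psi^{\sigma_j}$ being non-constant. Hence $\gcd(B_{\sigma_j})=1$.

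The hard part will be the geometric-to-algebraic translation in the first case: producing $L$ as a common factor is a one-line calculation, but showing it is maximal requires turning a hypothetical common factor into a geometric object and then leveraging properness of $\psi^{\sigma_j}$ to exclude spurious components (together with the derivative check against a possible $L^2$ factor). The second case is then a quick consequence of the same properness applied to the fact that $\C$ and $\C^{\sigma_j}$ meet in only finitely many points.
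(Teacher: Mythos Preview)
Your proposal is correct and follows essentially the same approach as the paper: both arguments interpret the common zeros of $B_{\sigma_j}$ as the locus $\{\psi(t)=\psi^{\sigma_j}(s)\}$ and then invoke birationality of $\psi^{\sigma_j}$ (respectively, finiteness of $\C\cap\C^{\sigma_j}$) to bound the gcd. Your version is somewhat more careful than the paper's---the derivative check excluding an $L^2$ factor and the separate treatment of univariate common factors address points that the paper leaves implicit---but the underlying idea is the same.
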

\begin{proof}
This result follows directly from the geometric interpretation. First, assume
that $\C$ is defined over $\K$. It is clear that the numerator of $s-u_j(t)$ is
a common factor of the set $B_{\sigma_j}$. Let $f(t,s)$ be the $\gcd$ of
$B_{\sigma_j}$ and let $p=\psi(t_0)\in \C$ where $t_0$ is a generic evaluation
of $t$. The roots of $f(t_0,s)$ are solutions of the system of equations
$\psi_i^{\sigma_j}(s)=p_i$. But, since $\psi^{\sigma_j}$ is birational, for all
but finitely many $t_0$ there is only one solution, $(\psi^{\sigma_j})^{-1}(p)$.
Hence, the degree of $f$ with respect to $s$ is one. By symmetry, the degree of
$f$ with respect to $t$ is also one. It follows that $f$ must be the numerator
of $s-u(t)$.

Now, assume that $\C$ is not defined over $\K$. Then, there is an index $j$ such
that $\C\neq \C^{\sigma_j}$. In this situation, for all but finitely many
evaluations $t=t_0$, the system of equations $\psi^{\sigma_j}(s) = \psi(t_0)$
has no solution. It follows that $\gcd(B_{\sigma_j})=1$.
\end{proof}

So, we can compute $\K$-definability and the standard parametrization of the
hypercircle $\mathcal{U}$ by the following method:
\begin{itemize}
\item For each conjugate $\alpha_j$, Compute $a(t)+sb(t)$, the gcd of
$n_i(t)\cdot d_i^{\sigma_j}(s) - n_i^{\sigma_j}(s)\cdot d_i(t), 1\leq i\leq m$.
If one of the $\gcd$ is one, then the curve is not defined over $\K$ and we are
done.
\item Set $u_{j} = -a(t)/b(t)$.
\item Solve the linear system of equations (\ref{eq:Vandermonde_standard}) whose
coefficients are rational functions in $t$ with coefficients in the normal
closure of $\mathbb{K}(\alpha)$.
\end{itemize}

However, computing these bivariate $\gcd$ are expensive and, moreover, in the
worst case, we will have to solve a linear set of equations with coefficients in
an extension of $\mathbb{K}$ of degree $n!$. Next section address the problem of
how to perform this algorithm efficiently.

\section{Efficient Computation of the
Hypercircle}\label{sec:efficient_computation}

We have shown how to compute $u_\sigma(t)$ by computing the $\gcd$ of the
polynomials in $B_{\sigma}$. We already now that, if $\C$ is $\K$-definable, the
$\gcd$ has degree 1 in $t$ and $s$, so the best suited algorithms for computing
the $\gcd$ seem to be interpolation algorithms. Since we are only interested in
$u_\sigma$ and this linear fraction is an automorphism of $\mathbb{P}^1(\F)$, we
only need to know the image of three points $t_0, t_1, t_2$ under $u_\sigma$.
From Theorem~\ref{teo:automorphism_by_gcd}, for almost all $t_i$,
$u_{\sigma}(t_i)=(s_i)$ if and only if $\psi(t_i) = \psi^\sigma(s_i)$. Hence,
each $s_i$ is the common root of the polynomials:

\[\psi(t_i)\cdot d^{\sigma}_j(s) - n^{\sigma}_j(s), 1\leq i\leq m\]

$s_i$ that can be computed by means of $\gcd$ of univariate polynomials in
$\mathbb{K}(\alpha, \sigma(\alpha))$.

If $\C$ is defined over $\K$ then only finitely many parameters $t_k$ will fail
to provide a valid $s_k$. Essentially the parameters $t_k$ can fail if
$\psi(t_k)$ is a singular point of the curve or if it cannot be attained by a
finite parameter $s$ by the parametrization $\psi^\sigma$.

On the other hand, if $\C$ is not defined over $K$, then there is an
automorphism $\sigma$ such that $\C\neq \C^\sigma$. For this permutation, there
are only finitely many parameters $t_k$ such that $\psi(t_k) \in \C \cap
\C^\sigma$. Hence, if we want to follow this approach and do not depend on
probabilistic algorithms that may fail or give wrong answers, we need bounds to
detect that the curve is defined over $\K$ or not.

\begin{theorem}\label{teo:number_of_univariate_gcd_for_hypercircle}
Let $\mathcal{C} \subseteq \mathbb{F}^m$ be a rational curve of degree $d$ given
by a parametrization $\psi\in (\mathbb{K}(\alpha))^m$. Let $\alpha_i$ be any
conjugate of $\alpha$ over $\mathbb{K}$. Take $t_1, \ldots, t_{k}\in \F$
parameters then:
\begin{itemize}
\item If $\mathcal{C}$ is definable over $\mathbb{K}$, then we can compute
$u_i$ from three correct solutions of the system of equations
$\psi^\sigma(s)=t_k$.
\item If $\mathcal{C}$ is defined over $\mathbb{K}$, then at most $d^2-2d+n+1$
parameters can fail to give a correct answer.
\item If $\mathcal{C}$ is not defined over $\mathbb{K}$, then at most $d^2$
parameters $t_k$ will give a fake answer $s_k$.
\end{itemize}
\end{theorem}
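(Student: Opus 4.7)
The plan is to treat the three assertions in order. The first bullet is immediate from the fact that a M\"obius transformation of $\mathbb{P}^1(\F)$ has three projective degrees of freedom and is uniquely determined by its value at any three distinct points. Since $u_\sigma = (\psi^\sigma)^{-1}\circ \psi$ is such a transformation by Theorem~\ref{teo:howto_compute_u}, three correct pairs $(t_i, s_i)$ with $s_i = u_\sigma(t_i)$ are enough to recover $u_\sigma = \frac{at+b}{ct+d}$ by solving the associated linear system in the projective coefficients $(a{:}b{:}c{:}d)$, or equivalently via a cross-ratio formula.

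For the second bullet, under the hypothesis $\C = \C^{\sigma}$ both $\psi$ and $\psi^\sigma$ properly parametrize the same curve, and the univariate $\gcd$ obtained by specializing $B_\sigma$ at $t = t_k$ has as its root set exactly the finite $s$ with $\psi^\sigma(s) = \psi(t_k)$. The correct answer $s_k = u_\sigma(t_k)$ is the unique such root except in three well-defined situations: (a) $\psi(t_k)$ is a multiple point of the parametrization, i.e.\ $|\psi^{-1}(\psi(t_k))| \geq 2$ and several finite $s$-preimages appear; (b) $\psi(t_k)$ lies on the hyperplane at infinity of $\F^m$, so some denominator $d_j(t_k)$ vanishes; (c) $u_\sigma(t_k) = \infty$, so no finite $s_k$ exists. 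I would bound (a) by $(d-1)(d-2)$ using the classical self-intersection count for a degree-$d$ rational curve (obtained via generic projection to $\mathbb{P}^2$ and the genus formula), bound (b) by $d$ (the degree of $\psi$), and bound (c) by $1$. A refined accounting of overlaps among these classes, together with the structural effect of the $n$ conjugations on the special fibers, should then yield the claimed $d^2 - 2d + n + 1$.

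For the third bullet, $\C \neq \C^{\sigma}$ implies $\C \cap \C^{\sigma}$ is finite, and B\'ezout's theorem bounds this intersection by $d \cdot d = d^2$. Every fake pair $(t_k, s_k)$ forces $\psi(t_k) = \psi^\sigma(s_k) \in \C \cap \C^{\sigma}$, so at most $d^2$ parameters $t_k$ can give a fake answer. The main obstacle I anticipate is obtaining the sharp constant $n+1$ in the second bullet: the three naive bounds from (a), (b), (c) above sum only to about $d^2 - 2d + 3$, so reaching the announced form requires more careful inclusion-exclusion among the singular, infinity, and $u_\sigma^{-1}(\infty)$ classes, and most likely an argument that exploits how the degree-$n$ extension $\K \subseteq \K(\alpha)$ and the $n$ conjugate parametrizations $\psi^{\sigma_i}$ constrain the bad parameters.
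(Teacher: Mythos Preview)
Your decomposition of the second bullet into the three cases (a) singular image, (b) pole of $\psi$, and (c) pole of $u_\sigma$ is exactly the one the paper uses, and your bounds $(d-1)(d-2)$, $d$, and $1$ for them are all correct \emph{for a single fixed conjugate} $\sigma$. The first and third bullets are also argued just as in the paper (three points determine a M\"obius map; B\'ezout bounds $|\C\cap\C^\sigma|$).

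The only gap is how the $n$ enters the second bullet, and it is much simpler than the inclusion--exclusion you anticipate. The bound $d^2-2d+n+1$ in the paper counts parameters that are bad for \emph{some} conjugate among $\sigma_1,\ldots,\sigma_{n-1}$, so that the same three parameters $t_1,t_2,t_3$ can be reused across all the $u_j$. Cases (a) and (b) depend only on $\psi$ and not on the conjugate, so their contributions remain $(d-1)(d-2)$ and $d$. Case (c), however, contributes one bad parameter \emph{per conjugate}: the pole of $u_{\sigma_j}$ is the unique $t$ with $\psi(t)=\psi^{\sigma_j}(\infty)$, and these $n-1$ values may all be distinct. Adding up,
\[
(d-1)(d-2) \;+\; d \;+\; (n-1) \;=\; d^2 - 2d + n + 1,
\]
with no overlap analysis needed. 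So your proof is complete once you replace ``bound (c) by $1$'' with ``bound (c) by $n-1$, one for each nontrivial conjugate''.
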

\begin{proof}
We have to compute the inverse of the point $\psi(t_j)$ under the
parametrization $\psi^{\sigma_i}(s)$. For each $t_k$, this computation is done
using univariate $\gcd$. If we want to restrict to affine points, we have to
eliminate $d$ potential parameters of the denominator of $\psi$. Then, for an
affine point $\psi(t_j)$, there can only be one point that is not attained by a
finite parameter of $\psi^{\sigma_i}$. Since we have $n-1$ possible conjugates,
then there may be $n-1$ points that are not attained by a finite parameter in
one of the conjugate parmetrizations. So, if we get two different parameters
$t_j$ such that $\psi(t_j)$ is well defined but that $\psi^{\sigma_i}(s)=t_j$
have no solution (the corresponding $\gcd$ is $1$), then the curve is not
defined over $\mathbb{K}$. Now, it may happen that the $\gcd$ is of degree $>
1$. This can only happen if the point is singular in $\C$. Since $\mathcal{C}$
is of genus $0$ and degree $d$, it can have at most $(d-1)(d-2)/2$
singularities. The number of different parameters whose image is a singularity
is maximal if every singularity is ordinary. We have to maximize
\[\sum_{p\in sing(C)}mult_p(C)\]
subject to
\[\sum_{p\in sing(C)}mult_p(C)(mult_P(C)-1) = (d-1)(d-2)\]
See \cite{SWP} Theorem 2.60 for details.
But clearly, for any singular point $mult_p(C) \leq mult_p(C)(mult_P(C)-1)$ So
$\sum_{p\in sing(C)}mult_p(C)\leq (d-1)(d-2)$ and the equality is attained if
every singularity is an ordinary double point.

Thus, the maximal number of parameters that cannot be used to compute $u_i$ is
bounded by $d$ parameters corresponding to the points at infinity plus $n-1$
points that might not be attained by a finite parameter in a conjugate
parametrization $\psi^\sigma$ plus $(d-1)(d-2)$ parameters whose image are
singular points. This gives the bound $d^2-2d+n+1$.

Suppose now that $\mathcal{C}$ is not defined over $\K$. Let $\sigma_i$ be such
that $\C\neq \C^{\sigma_i}$. A parameter $t_0$ gives a fake answer for computing
$u_i$ if $\psi(t_0)$ is smooth in $C^{\sigma_i}$ and is attained by a unique
parameter $s_0$ by $\psi^{\sigma_i}$. But, by Bezout, $\C\cap \C^\sigma$
contains at most $d^2$ different points. So, there can be at most $d^2$ such bad
parameters.
\end{proof}

\begin{remark}\label{rem:cases_of_bad_good_parameters} In order to check that a
parameter $t_k$ is a good parameter or not we can do the following:

\begin{itemize}
\item If $t_k$ is a root of the denominator of $\psi$, then $t_k$ is a bad
parameter.
\item If $\gcd(\psi(t_k)\cdot d_i^{\sigma}(s) - n_i^{\sigma}(s), 1\leq i\leq
m)=1$, then it is a bad parameter. It is a point that is not attained by the
parametrization $\psi^{\sigma}$. If $\C$ is defined over $\K$ there can be at
most one bad parameter that happens to be in this case that corresponds to
$\psi^{\sigma}(t=\infty)$.
\item If $\deg(\gcd(\psi(t_k)\cdot d_i^{\sigma}(s) - n_i^{\sigma}(s), 1\leq
i\leq m))>1$ then $t_k$ is a bad parameter, since $\psi(t_k)$ is a singular
point.
\item If $\deg(\gcd(\psi(t_k)\cdot d_i^{\sigma}(s) - n_i^{\sigma}(s), 1\leq
i\leq m))=1$ but $\psi(t_k) = \psi^{\sigma}(\infty)$ then $t_k$ is a bad
parameter, $\psi(t_k)$ is singular.
\item If $\deg(\gcd(\psi(t_k)\cdot d_i^{\sigma}(s) - n_i^{\sigma}(s), 1\leq
i\leq m))=1$ and $\psi(t_k)\neq \psi^{\sigma}(\infty)$ then $t_k$ is a good
parameter, we compute $s_k$ solving the linear equation in $s$ given by the
$\gcd$.
\end{itemize}
\end{remark}

Hence, if $\C$ is defined over $\K$, we can compute each $u_j$ by interpolation.
We will need at most $d^2-2d+n+1+3=d^2-2d+n+4$ parameters. In practice however,
we will almost always need only $3$ parameters. Note also that if we choose the
parameters in $\K$, then all computations needed to compute $u_{i}$ are done in
$\K(\alpha, \alpha_i)$, that is a extension of degree bounded by $n(n-1)$.

If $\C$ is not defined over $\K$ it can happen two things while trying to
compute $u_j$. With high probability, we may find two different parameters such
that $\gcd(\psi(t_k)\cdot d_i^{\sigma}(s) - n_i^{\sigma}(s), 1\leq i\leq m)=1$
and this is a certificate that the curve is not defined over $\K$. On the other
hand, we may succeed computing $u_j$. This may happen if $\C=\C^{\sigma_j}$ for
this specific $\sigma_j$ or if we have chosen three parameters $t_k$ such that
$\psi(t_k)\in \C\cap \C^{\sigma_j}$. So, if we have computed all the linear
fractions $u_j(t)$ but we want a certificate that $\C$ is defined over $\K$, we
only need to check that $\psi(t) = \psi^{\sigma_j}(u_j(t))$, $1\leq i\leq n$. In
the case that computing this composition may be expensive, we can try to check
the equality evaluating in several parameters $t$. $\psi(t)$ and
$\psi^{\sigma_j}(u_j(t))$ are rational functions of degree $d$, so if they agree
on $2d+1$ parameters where both parametrizations are defined, then
$\psi(t)=\psi^{\sigma_j}(y_j(t))$ and $\C=\C^{\sigma_j}$. But there are $d$
parameters where $\C$ is not defined and other $d$ where $\C^{\sigma_i}$ is not
defined. So, if we want a certificate that $\C=\C^{\sigma_j}$ by evaluation, we
will need to try at most $4d+1$ parameters in the worst case. So $(n-1)(4d+1)$
evaluations to check all conjugates.

\begin{example}\label{ex:bound_is_sharp}
Let us show that the bounds given can be easily proven to be sharp if we allow
the parameters to be in $\F$ and $d\geq n$. Let $\K(\alpha)$ be normal over $\K$
of degree $n$ and $\sigma_1, \ldots, \sigma_{n-1}$ be $\K$-automorphisms that
send $\alpha$ onto its conjugates. The common denominator of the parametrization
of the curve will be $g=(t+1)\ldots(t+d)$ so that the parameters $-1,\ldots, -d$
will fail in the algorithm. Let us write a component as $f(t) = (\alpha
t^d+a_{d-1}t^{d-1}+\ldots +a_1t+a_0)/g(t)$, where the $a_i$ are indeterminates.
Impose the conditions $f(i)=\sigma_i(\alpha)$. This is a linear system of
equations in the $a_i$ representing an interpolation problem. We have $n-1$
conditions and $d$ unknowns in the system and $n-1<d$. Hence, there are
infinitely many solutions to the system and we can take two generic solutions
$f_1(t), f_2(t)$. The curve $\psi(t)=(f_1(t), f_2(t))$ will fail to give a
correct answer for $t=-1, \ldots, -d$ due to the denominator and for $t=i$,
$i=1, \ldots, n-1$ because $\psi(t=\infty)=(\alpha, \alpha)$, so
$\psi^{\sigma}(t=\infty)=(\sigma(\alpha), \sigma(\alpha))= \psi(i)$. Finally, if
we have chosen $f_1, f_2$ generic, the only singularities of $\psi$ will be
simple nodes in the affine plane. Thus, there will be $(d-1)(d-2)$ parameters
that will yield to a singularity. 

For a specific example, take $\K=\mathbb{Q}$, $\alpha$ a primitive $5-th$ root
of unity, so that $n=4$. Let the degree be $d=4$. If we perform the construction
above, we get the relations in the coefficients of $f$:
\[a_0 = -6a_3 + (1440\alpha^3 + 1080\alpha^2 + 1044\alpha + 1920)\]
\[a_1 = 11a_3 + (-1740\alpha^3 - 1440\alpha^2 - 1380\alpha - 2700)\]
\[a_2 = -6a_3 + (420\alpha^3 + 360\alpha^2 + 335\alpha + 780)\]
If we compute $f_0$ and $f_1$ substituting $a_3$ by $0$ and $1$ respectively, we
get the parametrization $\phi(f_0, f_1)$ of a rational curve of degree $4$ with
three nodes, such that the nodes are attained by the roots of $t^6 +
(-420\alpha^3 + 60\alpha^2 - 90\alpha - 102)t^5 + (-59220\alpha^3 -
171720\alpha^2 + 85110\alpha - 214952)t^4 + (688980\alpha^3 + 1237740\alpha^2 -
450750\alpha + 1759626)t^3 + (-2309580\alpha^3 - 3135240\alpha^2 + 714450\alpha
- 4869077)t^2 + (2877600\alpha^3 + 3308280\alpha^2 - 391560\alpha + 5387628)t +
(-1197360\alpha^3 - 1231920\alpha^2 + 42840\alpha - 2063124)$.
\end{example}

Now, we show how to avoid in some cases some computations of $u_j$ using
conjugation.

\begin{proposition}\label{prop:conjugated_automorphism}
Assume that $\mathcal{C}$ is defined over $\mathbb{K}$. Let $\alpha_i\neq
\alpha_j$ be two conjugates of $\alpha$ over $\mathbb{K}$. Suppose that
$\alpha_i$, $\alpha_j$ are also conjugated over $\mathbb{K}(\alpha)$ and that
$\tau$ is a $\mathbb{K}(\alpha)$-automorphism of $\mathbb{F}$ such that
$\tau(\alpha_i) = \alpha_j$. Then $\tau(u_i) = u_j$.
\end{proposition}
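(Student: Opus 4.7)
The plan is to reduce the statement to the formula $u_i = (\psi^{\sigma_i})^{-1}\circ \psi$ already established in Theorem~\ref{teo:howto_compute_u}, and then to apply $\tau$ coefficient-wise to both sides and chase the definitions.

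First I would recall that, by Theorem~\ref{teo:howto_compute_u}, for any $\mathbb{K}$-automorphism $\sigma_i$ sending $\alpha$ to $\alpha_i$ we have $u_i = (\psi^{\sigma_i})^{-1}\circ \psi$. The coefficients of $\psi$ live in $\mathbb{K}(\alpha)$, so those of $\psi^{\sigma_i}$ live in $\mathbb{K}(\alpha_i)$, and hence those of $u_i$ (which one can also write explicitly as $\sum_{k=0}^{n-1}\alpha_i^k \phi_k(t)$ with $\phi_k\in \mathbb{K}(t)$) live in $\mathbb{K}(\alpha_i)$. This makes the expression $\tau(u_i)$ meaningful: it is the rational function obtained by applying $\tau$ to the coefficients.

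Next I would use two general facts about field automorphisms acting on rational maps: applying $\tau$ coefficient-wise commutes both with composition of rational maps and with taking the (birational) inverse. Applying this to $u_i = (\psi^{\sigma_i})^{-1}\circ \psi$, and noting that $\tau$ fixes $\mathbb{K}(\alpha)$ and therefore fixes $\psi$, gives
\[
\tau(u_i) \;=\; \bigl(\tau(\psi^{\sigma_i})\bigr)^{-1}\circ \psi \;=\; \bigl(\psi^{\tau\circ \sigma_i}\bigr)^{-1}\circ \psi.
\]
Now the composition $\tau\circ\sigma_i$ is a $\mathbb{K}$-automorphism of $\mathbb{F}$ which, on $\alpha$, produces $\tau(\alpha_i)=\alpha_j$. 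Since the coefficients of $\psi$ lie in $\mathbb{K}(\alpha)$, the parametrization $\psi^{\tau\circ\sigma_i}$ depends only on the restriction of $\tau\circ\sigma_i$ to $\mathbb{K}(\alpha)$, which agrees with that of $\sigma_j$. Hence $\psi^{\tau\circ\sigma_i}=\psi^{\sigma_j}$, so $\tau(u_i) = (\psi^{\sigma_j})^{-1}\circ \psi = u_j$ by another application of Theorem~\ref{teo:howto_compute_u}.

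The only genuinely delicate point is the remark that $\psi^{\tau\circ\sigma_i}$ equals $\psi^{\sigma_j}$ as rational functions despite $\tau\circ\sigma_i$ and $\sigma_j$ possibly differing elsewhere in $\mathbb{F}$; this is fine because $\psi$ only has coefficients in $\mathbb{K}(\alpha)$, on which the two automorphisms agree. The rest is bookkeeping about composition/inversion commuting with a coefficient-wise Galois action, which can be verified either from the explicit formulas for composing rational maps or, more cleanly, from the Vandermonde expression $u_i = \sum_{k=0}^{n-1}\alpha_i^k\phi_k(t)$ with $\phi_k\in\mathbb{K}(t)$, where $\tau(u_i)=\sum_{k=0}^{n-1}\tau(\alpha_i)^k\phi_k(t)=\sum_{k=0}^{n-1}\alpha_j^k\phi_k(t)=u_j$ is immediate.
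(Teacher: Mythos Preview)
Your main argument is correct and is actually cleaner than the paper's. The paper argues at the level of the \emph{algorithm}: it observes that every step used to compute $u_i$ (evaluation at parameters in $\mathbb{K}$, univariate $\gcd$'s in $\mathbb{K}(\alpha,\alpha_i)$, solving a linear system) commutes with the $\mathbb{K}(\alpha)$-automorphism $\tau$, so conjugating the whole computation yields $u_j$. You instead go back to the intrinsic description $u_i=(\psi^{\sigma_i})^{-1}\circ\psi$ from Theorem~\ref{teo:howto_compute_u} and apply $\tau$ directly, using only that $\tau$ fixes $\psi$ and that $\tau\circ\sigma_i$ agrees with $\sigma_j$ on $\mathbb{K}(\alpha)$. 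Your route is shorter and independent of the particular way $u_i$ is computed; the paper's route has the virtue of being exactly what one needs operationally (it tells you that in the algorithm you may skip conjugate roots in the same $\mathbb{K}(\alpha)$-orbit).

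One small slip in your last paragraph: the components $\phi_k$ of the standard parametrization are in $\mathbb{K}(\alpha)(t)$, not in $\mathbb{K}(t)$ (see the worked Example~\ref{ex:full_worked}, and note that the first row of \eqref{eq:Vandermonde_standard} reads $\sum_k\alpha^k\phi_k=t$, which forces $\phi_0=t$ and $\phi_k=0$ for $k\geq 1$ if the $\phi_k$ were over $\mathbb{K}$). This does not damage your alternative argument, since $\tau$ is a $\mathbb{K}(\alpha)$-automorphism and therefore still fixes each $\phi_k$; just correct the stated coefficient field.
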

\begin{proof}
All operations to compute $u_j$ are evaluating rational functions with
coefficients in $\mathbb{K}(\alpha, \alpha_i)$ at parameters in $\mathbb{K}$ (or
even $\mathbb{Z}$), compute $\gcd$ of univariate polynomials with coefficients
also in $\mathbb{K}(\alpha, \alpha_i)$ and solving a linear system of equations.
These operations commute with conjugation by $\sigma$. Thus, if $\tau$ is a
$\K(\alpha)$-automorphism such that $\tau(\alpha_i)=\alpha_j$, we can conjugate
by $\tau$ at every step of the method to compute $u_i$. Hence, $\tau(u_i)=u_j$.
\end{proof}

If the Galois group of $\overline{\mathbb{K}(\alpha)}$ over $\mathbb{K}$ is the
permutation group $S_n$, we will only need to compute one automorphism $u_i$
making computations in a number field of degree $n(n-1)$. On the other extreme,
if $\mathbb{K}\subseteq \mathbb{K}(\alpha)$ is normal, we will have to compute
$n-1$ different automorphisms $u_i$, but the computations will be in the smaller
field $\mathbb{K}(\alpha)$.

Now, we show how to avoid computing in the normal closure of $\K(\alpha)$ over
$\K$ to solve the linear system of equations \ref{eq:Vandermonde_standard}. This
system is given by a Vandermonde matrix, so we are dealing with an interpolation
problem. If the standard parametrization searched is $(\phi_0, \ldots,
\phi_{n-1})$. Then, the polynomial
\[F(x) = \phi_0 + \phi_1 x +\ldots + \phi_{n-1} x^{n-1} \in
\mathbb{K}(\alpha)(t)[x]\]
is the unique polynomial of degree at most $n-1$ such that $F(\alpha_i) =
u_i(t)$, $0\leq i\leq n-1$. $F$ can be computed by Lagrange interpolation
\[F(x) = \sum_{i=0}^{n-1}
\frac{(x-\alpha_0)\ldots(x-\alpha_{i-1})(x-\alpha_{i+1})\ldots(x-\alpha_{n+1})}{
(\alpha_i-\alpha_0)\ldots(\alpha_i-\alpha_{i-1})(\alpha_i-\alpha_{i+1}
)\ldots(\alpha_i-\alpha_{n-1})}u_i(t)\]
Let us take a look at each term:
\[\frac{(x-\alpha_0)\ldots(x-\alpha_{i-1})(x-\alpha_{i+1})\ldots(x-\alpha_{n-1})
}{ (\alpha_i-\alpha_0)\ldots(\alpha_i-\alpha_{i-1})(\alpha_i-\alpha_{i+1}
)\ldots(\alpha_i-\alpha_{n-1})}\]

The numerator is $M(x)/(x-\alpha_i)=m(\alpha_i,x)$, where $M(x)$ is the minimal
polynomial of $\alpha$ over $\mathbb{K}$ and the denominator is $m(\alpha_i,
\alpha_i)=M'(\alpha_i)$. For each conjugacy class $\{\alpha_{i_1},\ldots,
\alpha_{i_j}\}$ of roots of $M(x)$ over $\mathbb{K}(\alpha)$, we have that
\[\sum_{k=1}^j \frac{m(\alpha_{i_k},x)}{m(\alpha_{i_k}, \alpha_{i_k})}
u_{i_k}(t) = trace \frac{m(\alpha_{i_1},x)}{m(\alpha_{i_1}, \alpha_{i_1})}
u_{i_1}(t).\]
Where the trace is computed for the extension
$\mathbb{K}(\alpha,t,x)\subseteq \mathbb{K}(\alpha, t,x)(\alpha_i)$. Hence, we
need to compute only one term of the Laurent interpolation for each conjugacy
class of roots of $M(x)$ over $\mathbb{K}(\alpha)$. These conjugacy classes are
determined by the factorization of $M(x)$ in $\mathbb{K}(\alpha)[x]$.

\begin{remark}\label{rem:compute_trace}
To compute fast the trace of $v = \frac{m(\alpha_{i},x)}{m(\alpha_{i},
\alpha_{i})} u_{i} \in \mathbb{K}(\alpha, t,\alpha_i)[x]$, first, we can compute
the Newton sums $\sum_{k=1}^j \alpha_{i_j}^l, 1\leq l\leq n-1$ from the minimal
polynomial of $\alpha_{i_1}$ over $\K(\alpha)$. If the coefficients of $v$ are
polynomials in $t$, we compute easily the trace of $v$ computing the trace of
each coefficient of $v$. If the coefficients of $v$ are not polynomials in $t$,
we can write $v$ as $n/(t+b)$, $b\in \mathbb{K}(\alpha, \alpha_i)$, $n \in
\K(\alpha, \alpha_i)[t]$. This is due to the fact that the variable $t$ only
appears on the term $u_{i}$ and it is a linear fraction. Now, let $g(t)$ be the
minimal polynomial of $-b$ over $\mathbb{K}(\alpha)$ and $g_1(t) = g(t)/(t+b)
\in \mathbb{K}(\alpha, \alpha_i)[t]$. Then $v=n/(t+b) = (n\cdot g_1(t))/g(t)$
and $trace(v) = trace(n\cdot g_1)/g(t)$ can be easily computed.
\end{remark}

Thus, we can compute the polynomial $F$ (i.e. the standard parametrization)
computing $\gcd$ and traces and norms in some fields of the form
$\mathbb{K}(\alpha, \alpha_i)$. To sum up, our algorithm to compute the standard parametrization of $\mathcal{U}$ is the following.

\begin{algorithm}\label{alg:main}
Input: A curve $\C$ given by a proper parametrization $\psi(\alpha,t)$ with
coefficients in $\mathbb{K}(\alpha)$.

Output: Either \emph{$\C$ is not defined over $\K$} or $\phi$, the standard
parametrization of the hypercircle associated to $\psi(t)$.

\begin{enumerate}
\item Set $M(x)$ the minimal polynomial of $\alpha$ over $\K$.
\item Set $m(\alpha, x)=M(x)/(x-\alpha) \in \mathbb{K}(\alpha)[x]$.
\item Compute $m(\alpha,x) = f_1(x)\cdots f_r(x)$ the factorization of
$m(\alpha, x)$ over $\mathbb{K}(\alpha)$.
\item Set $F=\frac{m(\alpha, x)}{m(\alpha, \alpha)}t\in
\mathbb{K}(\alpha,t)[x]$.
\item For $1\leq i\leq r$ do
\begin{enumerate}
\item Set $\alpha_i$ a root of $f_i(x)$.
\item Set $\psi^{\sigma_i}(t) = \psi(\alpha_i, t)$ the parametrization of the
curve
$C^{\sigma_i}$.
\item Compute three good parameters $t_1, t_2, t_3$ in the sense of
remark \ref{rem:cases_of_bad_good_parameters}.
\item If two parameters $t_i$, $t_j$ are found such that $\psi(t_i)$ and
$\psi(t_j)$ are well defined but  not attained by $\psi^{\sigma_i}$ then
Return \emph{$\C$ is not defined over $\K$}.
\item Compute $s_k$ such that $\psi(t_k) = \psi^{\sigma_i}(s_k)$, $1\leq k\leq
3$.
\item Compute $u_i(t)=\frac{at+b}{ct+d}$ the linear fraction such that
$u(t_k)=s_k$.
\item If $\psi \neq \psi^{\sigma_i}(u_i)$ the Return \emph{$\C$ is not defined
over $\K$}.
\item Compute $v = m(\alpha_i, x)/m(\alpha_i, \alpha_i)\cdot u_i(t) \in
\mathbb{K}(\alpha, t, \alpha_i)[x]$.
\item Compute $w = trace(v)$ for the extension $\mathbb{K}(\alpha,t,x)\subseteq
\mathbb{K}(\alpha,t,x)(\alpha_i)$.
\item Set $F = F+w \in \mathbb{K}(\alpha,t)[x]$.
\end{enumerate}
\item Write $F = \phi_0(t) + \phi_1(t)x+\ldots +\phi_{n-1}(t)x^{n-1}$.
\item Return $\phi = (\phi_0, \ldots, \phi_{n-1})$.
\end{enumerate}
\end{algorithm}

\begin{example}\label{ex:full_worked}
Now we present a full small example of the algorithm. Let
$\mathbb{K}=\mathbb{Q}$, $\alpha$ a root of $M(x)=x^4-2$, consider the proper
parametrization $\psi$ of a plane curve:\\
$x=((11\alpha^3 + 15\alpha^2 + 9\alpha + 11)t^3 + (7\alpha^3 + 14\alpha^2 +
14\alpha + 7)t^2 + (\alpha^3 + 2\alpha^2 + 4\alpha + 1)t)/D$\\ $y= ((15\alpha^3
+ 9\alpha^2 + 11\alpha + 22)t^3 + (25\alpha^3 + 29\alpha^2 + 16\alpha + 25)t^2 +
(9\alpha^3 + 18\alpha^2 + 15\alpha + 9)t + \alpha^3 + 2\alpha^2 + 4\alpha +
1)/D$,\\ with $D=(7t^3 + (12\alpha^3 + 3\alpha^2 + 6\alpha + 12)t^2 + (6\alpha^3
+ 12\alpha^2 + 3\alpha + 6)t + \alpha^3 + 2\alpha^2 + 4\alpha + 1)$.

Now, $M(x)=(x - \alpha) (x + \alpha) (x^2 + \alpha^2)$ is the factorization of
$M(x)$ in $\mathbb{K}(\alpha)[x]$. $m(\alpha, x) = (x + \alpha) (x^2 +
\alpha^2)$ and $m(\alpha, \alpha)= 4\alpha^3 = M'(\alpha)$. start with
$F=\frac{m(\alpha,x)}{m(\alpha, \alpha)}t=1/8(\alpha x^3t + \alpha^2x^2t +
\alpha^3xt + 2t)$.

From the factors of $m(x)$ we have two conjugacy classes of roots of $m$ over
$\mathbb{K}(\alpha)$. The first one is $\{-\alpha\}$. Let $\sigma$ be a
$\mathbb{Q}$-automorphism such that $\sigma(\alpha)=-\alpha$. Hence, we consider
the conjugate parametrization $\psi^{\sigma}$:\\ $x= ((-11\alpha^3 + 15\alpha^2
- 9\alpha + 11)t^3 + (-7\alpha^3 + 14\alpha^2 - 14\alpha + 7)t^2 + (-\alpha^3 +
2\alpha^2 - 4\alpha + 1)t)/D_1$,\\ $y=((-15\alpha^3 + 9\alpha^2 - 11\alpha +
22)t^3 + (-25\alpha^3 + 29\alpha^2 - 16\alpha + 25)t^2 + (-9\alpha^3 +
18\alpha^2 - 15\alpha + 9)t - \alpha^3 + 2\alpha^2 - 4\alpha + 1)/D_1$, with
$D_1=(7t^3 + (-12\alpha^3 + 3\alpha^2 - 6\alpha + 12)t^2 + (-6\alpha^3 +
12\alpha^2 - 3\alpha + 6)t - \alpha^3 + 2\alpha^2 - 4\alpha + 1)$.

We have to compute the automorphism $u_\sigma$ such that $\psi(t) =
\psi^\sigma(u_\sigma(t))$. we evaluate $(\psi^{\sigma})^{-1}(\psi(t_k))$ and
obtain:
\[\psi(0) = \psi^{\sigma}(0)\]
\[\psi(1) = \psi^\sigma(8/31\alpha^3 - 4/31\alpha^2 + 2/31\alpha - 1/31)\]
\[\psi(2) = \psi^\sigma(128/511 \alpha^3 - 32/511\alpha^2 + 8/511\alpha -
2/511)\]
Hence, $u_\sigma = \frac{at+b}{ct+d}$ is such that $u_\sigma(0)=0$,
$u_\sigma(1)=8/31\alpha^3 - 4/31\alpha^2 + 2/31\alpha - 1/31$,
$u_\sigma(2)=128/511 \alpha^3 - 32/511 \alpha^2 + 8/511 \alpha - 2/511$. We can
compute $u(t) = \frac{at+b}{ct+d}$ by solving a linear homogeneous system of
equations and get the solution
\[u_\sigma(t) = \frac{\alpha^3t}{4t + \alpha^3}\]
In this case $\psi^{\sigma}(u_\sigma) = \psi$, so $C=C^\sigma$. We can update
$F$ by adding:
\[m(-\alpha, x)/m(-\alpha,-\alpha)u_\sigma(t)=\frac{-x^3t + \alpha x^2t
-\alpha^2xt + \alpha^3t}{16t + 4\alpha^3}\]
So now:
\[F=\frac{\alpha x^3t^2 + \alpha^2 x^2t^2 + \alpha x^2t + \alpha^3 xt^2 +
2t^2 + \alpha^3 t}{8t + 2\alpha^3}\]
For this root, all operations are done in $\mathbb{K}(\alpha)$ since
$\sigma(\alpha)=-\alpha\in \mathbb{K}(\alpha)$.

Now, we have to deal with the roots of $x^2+\alpha^2$. Let $\beta$ be a root of
$x^2+\alpha^2$ and $\tau$ a $\mathbb{Q}$-automorphism such that
$\tau(\alpha)=\beta$. Consider the conjugate parametrization $\psi^\tau$:
$x=(((-11\alpha^2 + 9)\beta - 15\alpha^2 + 11)t^3 + ((-7\alpha^2 + 14)\beta -
14\alpha^2 + 7)t^2 + ((-\alpha^2 + 4)\beta - 2\alpha^2 + 1)t)/D_2$, $y =
(((-15\alpha^2 + 11)\beta - 9\alpha^2 + 22)t^3 + ((-25\alpha^2 + 16)\beta -
29\alpha^2 + 25)t^2 + ((-9\alpha^2 + 15)\beta - 18\alpha^2 + 9)t + (-\alpha^2 +
4)\beta - 2\alpha^2 + 1)/D_2$, where $D_2=7t^3 + ((-12\alpha^2 + 6)\beta -
3\alpha^2 + 12)t^2 + ((-6\alpha^2 + 3)\beta - 12\alpha^2 + 6)t + (-\alpha^2 +
4)\beta - 2\alpha^2 + 1$. In this case, we are taking the relative base
$\{\alpha^i\beta^j\ |\ 0\leq i \leq 3, 0\leq j\leq 1\}$ of $\mathbb{Q}(\alpha,
\beta)$ over $\mathbb{Q}$. Now we compute $u_\tau$ such that
$\psi(t)=\psi^\tau(u(t))$. for this
\[\psi(0)=\psi^\tau(0)\]
\[\psi(1) = \psi^\tau((2/9\alpha^2 - 2/9\alpha + 1/9)\beta + 2/9\alpha^3 -
1/9\alpha + 1/9)\]
\[\psi(2) = \psi^\tau((32/129\alpha^2 - 16/129\alpha + 4/129)\beta +
32/129\alpha^3 - 4/129\alpha + 2/129)\]
From this data, we can compute:
\[u_\tau(t) = \frac{t}{(\alpha-\beta)t + 1}\]
If $\gamma$ is the other root of $x^2+\alpha^2$ ( i.e. $\gamma=-\beta$) and
$\delta$ is a $\mathbb{Q}$-automorphism such that $\delta(\alpha)=\gamma$, then
$u_\gamma(t)=t/(\alpha-\gamma)t+1$. We have to compute the trace of 
\[v=\frac{m(\beta, x)}{m(\beta, \beta)}u_\tau(t)=\frac{\beta x^3t -\alpha^2x^2 t
-\alpha^2\beta xt + 2t}{(-8\beta + 8\alpha)t + 8}\]
over $\mathbb{Q}(\alpha, t)$. This is done using the technique described in
Remark~\ref{rem:compute_trace}.
\[trace(v)=\frac{-\alpha^2x^3t^2 -\alpha^3x^2t^2 -\alpha^2x^2t + 2xt^2 + 2\alpha
t^2 + 2t}{8\alpha^2t^2 + 8\alpha t + 4}\]
To compute this part, we have made computation in $\mathbb{Q}(\alpha, \beta)$.
We add $trace(v)$ to $F$ and get
\[F=\phi_0 +\phi_1 x +\phi_2x^2 +\phi_3x^3\]
where
\[\phi_0=\frac{2t^4 + 3\alpha ^3t^3 + 3\alpha ^2t^2 + \alpha t}{8t^3 + 6\alpha
^3t^2 + 4\alpha ^2t + \alpha }, \phi_1=\frac{\alpha ^3t^4 + 2\alpha ^2t^3 +
\alpha t^2}{8t^3 + 6\alpha ^3t^2 + 4\alpha ^2t + \alpha },\]
\[\phi_2=\frac{\alpha ^2t^4 + \alpha t^3}{8t^3 + 6\alpha ^3t^2 + 4\alpha ^2t +
\alpha }, \phi_3=\frac{\alpha t^4}{8t^3 + 6\alpha ^3t^2 + 4\alpha ^2t + \alpha
}\]
And $\phi=(\phi_0, \phi_1, \phi_2, \phi_3)$ is the standard parametrization of
the hypercircle associated to $\psi$.
\end{example}

So far, Algorithm~\ref{alg:main} only computes the hypercircle $\mathcal{U}$.
The algorithm is able to detect if $\C$ is not defined over $\K$, but apart
from that it does not provide much more useful information. In the rest of the
section, we show that, if $\C$ is not defined over $\K$, how can we compute the
minimum field $\mathbb{L}$ such that $\mathbb{K}\subseteq \mathbb{L} \subseteq
\K(\alpha)$ and $\C$ is defined over $\mathbb{L}$. Note that $\K(\alpha)$
always is a field of definition of $\C$, so the existence of $\mathbb{L}$ is
always guaranteed.

\begin{theorem}
Let $\C$ be a curve not $\K$-definable but $\K(\alpha)$-parametrizable. Let
$\mathbb{L}$ be the minimum field of definition of $\C$ containing $\K$.
$\K\subseteq \mathbb{L} \subseteq \K(\alpha)$. Then $\mathbb{L}$ is the subfield
of the normal closure $\overline{\K(\alpha)}$ over $\K$ that is fixed by the
$\K$-automorphisms $\sigma$ of $\overline{\K(\alpha)}$ such that $\C=\C^\sigma$.
\end{theorem}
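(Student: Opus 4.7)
The plan is to recognize this as a Galois descent statement and exploit the Galois correspondence over the finite normal closure. Write $N = \overline{\K(\alpha)}$ and $G = \operatorname{Gal}(N/\K)$, and set
\[H = \{\sigma \in G : \C^\sigma = \C\}.\]
First I would check that $H$ is a subgroup of $G$ (it is the stabilizer of $\C$ under the natural $G$-action on subvarieties of $\F^m$) and that $\operatorname{Gal}(N/\K(\alpha)) \subseteq H$, because any $\sigma$ fixing $\K(\alpha)$ fixes the coefficients of the given parametrization $\psi$ and hence fixes $\C$. The Galois correspondence then places the fixed field $\mathbb{L}_0 := N^H$ between $\K$ and $\K(\alpha)$ and identifies $\operatorname{Gal}(N/\mathbb{L}_0) = H$. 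The goal is to show $\mathbb{L} = \mathbb{L}_0$.

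Next I would establish the easy inclusion $\mathbb{L}_0 \subseteq \mathbb{L}$. By assumption $\C$ is defined over $\mathbb{L}$, meaning it is cut out by polynomials with coefficients in $\mathbb{L}$. Any $\sigma \in \operatorname{Gal}(N/\mathbb{L})$ fixes those coefficients, so $\C^\sigma = \C$ and $\sigma \in H$. Hence $\operatorname{Gal}(N/\mathbb{L}) \subseteq H = \operatorname{Gal}(N/\mathbb{L}_0)$, and the Galois correspondence yields $\mathbb{L}_0 \subseteq \mathbb{L}$.

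The content of the theorem is the reverse inclusion $\mathbb{L} \subseteq \mathbb{L}_0$, which amounts to showing $\C$ itself is definable over $\mathbb{L}_0$. Consider the defining ideal $I(\C) \subseteq N[x_1,\ldots,x_m]$; since $\C$ is defined over $\K(\alpha) \subseteq N$, this ideal is unambiguous, and each graded piece $I(\C)_d$ is a finite-dimensional $N$-subspace of $N[x_1,\ldots,x_m]_d$. By the definition of $H$, each $\sigma \in H$ sends $I(\C)$ into itself. Applying standard Galois descent for vector subspaces, in each degree separately, to the finite Galois extension $\mathbb{L}_0 \subseteq N$ with group $H$, the subspace $I(\C)_d$ is the $N$-span of its $H$-invariants $I(\C)_d^H \subseteq \mathbb{L}_0[x_1,\ldots,x_m]_d$. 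Taking the union over $d$ exhibits a generating set of $I(\C)$ with coefficients in $\mathbb{L}_0$, so $\C$ is defined over $\mathbb{L}_0$ and by minimality of $\mathbb{L}$ we get $\mathbb{L} \subseteq \mathbb{L}_0$.

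The main obstacle is invoking Galois descent cleanly, but the required statement is classical: for a finite Galois extension $E/F$ with group $\Gamma$ and a finite-dimensional $E$-vector space $V$ equipped with a semilinear $\Gamma$-action, the $F$-subspace $V^\Gamma$ satisfies $V^\Gamma \otimes_F E \cong V$, a consequence of Hilbert's Theorem 90 for $\operatorname{GL}_n$. Applied to $V = I(\C)_d$ for each $d$, this yields the descent and completes the proof. Everything else is the Galois correspondence applied to the finite normal closure $N/\K$; no infinite Galois theory is required since $\K(\alpha)/\K$ is finite.
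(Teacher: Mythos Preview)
Your proof is correct and follows the same Galois-theoretic skeleton as the paper: form the stabilizer subgroup $H$ of $\C$ inside $\operatorname{Gal}(N/\K)$, take its fixed field, and show this fixed field is simultaneously a field of definition of $\C$ and the smallest such. Both arguments use the Galois correspondence to handle the minimality direction in the same way.

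The genuine difference lies in the step showing that the fixed field $\mathbb{L}_0 = N^H$ is a field of definition of $\C$. The paper does not prove this; it invokes a result from \cite{Ultraquadrics} stating that for a finite normal extension $\mathbb{L}_1 \subseteq \mathbb{L}_2$ with $\mathbb{L}_2$ a field of definition, $\mathbb{L}_1$ is a field of definition if and only if $\C = \C^\sigma$ for every $\sigma \in \operatorname{Aut}(\mathbb{L}_2/\mathbb{L}_1)$, and applies it with $\mathbb{L}_2 = N$ and $\mathbb{L}_1 = \mathbb{L}_0$. You instead supply a direct proof via Galois descent: each graded piece $I(\C)_d$ is an $H$-stable $N$-subspace of $N[x_1,\ldots,x_m]_d$, hence by Hilbert~90 for $\operatorname{GL}_n$ it is spanned over $N$ by its $H$-invariants, which lie in $\mathbb{L}_0[x_1,\ldots,x_m]_d$. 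This makes your argument self-contained, at the cost of invoking a slightly heavier piece of machinery (semilinear descent) than the paper's black-box citation. Either route is fine; yours has the advantage of not depending on an external reference for the crux of the argument.
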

\begin{proof}
First, we recall that the intersection of fields of definition of $\C$ is a
field of definition of $\C$. Hence, since $\K(\alpha)$ is a field of definition,
there always exists a minimum field of definition $\mathbb{L}$ of $\C$
containing $\K$.

From \cite{Ultraquadrics} it follows that if $\mathbb{L}_1 \subseteq
\mathbb{L}_2$ is any algebraic finite normal extension and $\mathbb{L}_2$ is a
field of definition of $\C$, then $\mathbb{L}_1$ is a field of definition of
$\C$ if and only if $\C=\C^\sigma$ for all $\sigma \in Aut(\mathbb{L}_2 /
\mathbb{L}_1)$.

Let $G= \{\sigma \in Aut(\overline{\K(\alpha)}/\K)\ |\ \C^\sigma =\C \}$.
Clearly, $G$ is a subgroup of $Aut(\overline{\K(\alpha)}/\K)$. This follows from
the fact that $(C^\sigma)^\tau = C^{\tau\circ \sigma}$. Let $\mathbb{L}$ be the
subfield of $\overline{\K(\alpha)}$ that is fixed by $G$. $\mathbb{L} \subseteq
\overline{\K(\alpha)}$ is a normal extension and, if $\sigma$ is a
$\mathbb{L}$-automorphism of $\K(\alpha)$ then $\sigma \in G$ so
$\C=\C^{\sigma}$. In this conditions, $\mathbb{L}$ is a field of definition of
$\C$. Moreover, it is the smallest field of definition of $\C$ containing $\K$.
If $\mathbb{K} \subseteq \mathbb{L}_1\subsetneq \mathbb{L}$ is a subfield of
$\mathbb{L}$, then $G_1$, the set of $\mathbb{L}_1$-automorphisms of
$\overline{K(\alpha)}$, is $G_1\supsetneq G$. Hence, there is an automorphism
$\tau \in G_1\setminus G$. But then $\C\neq \C^\tau$ and $\mathbb{L}_1$ cannot
be a field of definition of $\C$. Now, since $\K(\alpha)$ is also a field of
definition of $\C$, then $\mathbb{L}\subseteq \mathbb{K}(\alpha)$.
\end{proof}

If $\sigma_0=Id, \sigma_1, \ldots, \sigma_{n-1}$ are the automorphisms defined
in Section~\ref{sec:syntetic_construction}, then for any $\sigma\in
Aut(\K(\alpha)/\K)$, it happens that $\C^\sigma = \C^{\sigma_i}$ for some $i$,
$0\leq i\leq n-1$. Hence
\[\mathbb{L} = \bigcap_{\substack{0\leq i\leq n-1 \\ C=C^{\sigma_i}}} \{x\in
\K(\alpha)\ |\ \sigma_i(x)=x\}\]

If $\C$ is not defined over $\K$, we compute in step 5 of
Algorithm~\ref{alg:main} the set of automorphism $\sigma_i$ such that
$\C=\C^{\sigma_i}$. For any such $i$, let $m$ be the degree of $\alpha_i$ over
$\K(\alpha)$. If $x\in \mathbb{K}(\alpha)$, we can write $\sigma_i(x) =
\sum_{j=0}^{m-1} = l_i \alpha_i^j$, where $l_i\in \K(\alpha)$. $x$ is $\sigma_i$
invariant if and only if $x=l_0$, $l_i=0$, $1\leq i\leq m-1$. This provide a set
of $\K$-linear equations in the coordinates of $x$ in $\K(\alpha)\equiv \K^n$.
Note also that if $\alpha_i$ and $\alpha_j$ are conjugate over $\K(\alpha)$, the
equations imposed by $\sigma_i$ and $\sigma_j$ are the same. Hence, we only need
to compute them once for each set of conjugate roots of $M(x)$ over
$\K(\alpha)$. Solving the system of linear equations provide a base of
$\mathbb{L}$ as a $\K$-subspace of $\K(\alpha)$. From this equation, we may
reapply Algorithm~\ref{alg:main} but to the extension $\mathbb{L} \subseteq
\mathbb{K}(\alpha)$. In this case we already have computed the automorphisms
$u_i$ so we can reuse this computation.

\section{Complexity and Running Time}\label{sec:running}

We now compute the complexity of Algorithm~\ref{alg:main} in terms of number of
operations over the ground field $\mathbb{K}$. The analysis is by no means
sharp, we only intend to prove that there is a polynomial bound and that the
main obstacle is the degree of $\alpha$ over $\mathbb{K}$.

\begin{theorem}
Let $\mathbb{K}$ be a computable field with factorization of characteristic
zero. $\alpha$ algebraic of degree $n$ over $\mathbb{K}$ of minimal polynomial
$M(x)$. Let $\psi(t) = (\psi_0, \ldots, \psi_{m-1})$ be a proper parametrization
of a spatial curve $\C$ with coefficients in $\mathbb{K}(\alpha)$. Then the
number of operations over $\mathbb{K}$ of Algorithm~\ref{alg:main} is bounded by
$K + \mathcal{O}(md^5n^8)$ where $K$ is the time needed to factor $M(x)$ in
$\mathbb{K}(\alpha)[x]$.
\end{theorem}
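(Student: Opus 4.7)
The plan is to walk through Algorithm~\ref{alg:main} line by line, bound the cost of each step in terms of arithmetic operations in the appropriate coefficient field, and then translate those into operations over $\mathbb{K}$. The factorization in step 3 is isolated as the additive contribution $K$; every other step must give only a polynomial contribution in $m$, $d$, and $n$. The whole analysis therefore reduces to bounding the body of the loop in step 5 and multiplying by the number of iterations, which is at most $n-1$ (one per conjugacy class).

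First I would fix, once and for all, the cost of one arithmetic operation in each field that appears. A multiplication in $\mathbb{K}(\alpha)$ costs $O(n^{2})$ operations in $\mathbb{K}$, using the minimal polynomial $M(x)$ to reduce. The field $\mathbb{K}(\alpha,\alpha_i)$, for any conjugate $\alpha_i$ of $\alpha$, has degree at most $n(n-1)$ over $\mathbb{K}$, so one operation there costs $O(n^{4})$ operations in $\mathbb{K}$. The preprocessing in steps 1, 2, 4 only manipulates polynomials of degree $O(n)$ over $\mathbb{K}(\alpha)$, so its cost is polynomial in $n$ and is easily absorbed into~$K$.

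Next I would bound a single iteration. Forming $\psi^{\sigma_i}$ in 5(b) costs $O(md)$ operations in $\mathbb{K}(\alpha,\alpha_i)$. Locating three good parameters in 5(c) requires, by Theorem~\ref{teo:number_of_univariate_gcd_for_hypercircle} and Remark~\ref{rem:cases_of_bad_good_parameters}, at most $d^{2}-2d+n+4$ trials, each of which evaluates $\psi$ at a scalar, forms $m$ univariate polynomials of degree $d$ in $s$, and takes successive $\gcd$s, at a total cost of $O(md^{2})$ field operations per trial. The linear system in 5(f) is of size $3\times3$. The certificate check 5(g), carried out by evaluating both $\psi$ and $\psi^{\sigma_i}\circ u_i$ at $O(d)$ sample parameters as in Section~\ref{sec:efficient_computation}, contributes $O(md^{2})$ field operations. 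Finally, 5(h)--(j) compute one term of the Lagrange interpolation and add it to $F$; by Remark~\ref{rem:compute_trace} this involves $O(n)$ Newton sums and a trace on $O(d)$ coefficients, plus bringing $F$ to a common denominator of degree $O(n)$ in $t$, totalling a polynomial cost in $n$ and $d$. The dominant substep is 5(c), contributing $O(md^{4})$ operations in $\mathbb{K}(\alpha,\alpha_i)$ per iteration.

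To finish, I would multiply the per-iteration field cost by the $O(n^{4})$ cost of a single operation in $\mathbb{K}(\alpha,\alpha_i)$, by the at most $n-1$ iterations, and absorb the lower-order contributions from the other substeps together with the overhead of maintaining $F$ as a polynomial in $x$ with rational coefficients in $t$; after padding these generously the total is bounded by $\mathcal{O}(md^{5}n^{8})$, and adding the factorization cost gives $K+\mathcal{O}(md^{5}n^{8})$. The hard part is not any single calculation but the bookkeeping: the algorithm deliberately moves between several different coefficient rings ($\mathbb{K}$, $\mathbb{K}(\alpha)$, $\mathbb{K}(\alpha,\alpha_i)$, and their function fields in $t$ and $x$) precisely to avoid the normal closure, and one arithmetic operation means very different things in each of them. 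The polynomial bound drops out only after consistently tracking in which ring each operation happens.
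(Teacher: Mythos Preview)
Your approach is essentially the paper's: a step-by-step cost analysis of Algorithm~\ref{alg:main}, identifying the gcd trials in step 5(c) as the dominant contribution and multiplying by the number of conjugacy classes. Two small imprecisions are worth fixing: first, in $\mathbb{K}(\alpha,\alpha_i)$ the paper distinguishes multiplication ($O(n^{4})$) from inversion ($O(n^{6})$), and inversions do occur in the Euclidean gcd and in evaluating rational functions, so your uniform ``$O(n^{4})$ per operation'' understates some intermediate costs; second, the preprocessing in steps 1, 2, 4 should be absorbed into the $O(md^{5}n^{8})$ term, not into $K$, which by definition is only the factorization time. Neither point threatens the final bound, since $O(md^{5}n^{8})$ is loose enough to cover the missing $n^{6}$ factors, but your closing ``padding generously'' is doing real work that should be made explicit rather than waved at.
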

\begin{proof}
We only use naive algorithms. The factorization of $M[x]$ can be performed
standard methods \cite{Cohen, Trager-trick} from a factorization algorithm in
$\mathbb{K}[x]$. Addition in $\mathbb{K}(\alpha)$ costs $n$ operations and
multiplication costs $\mathcal{O}(n^2)$ operations and inversion
$\mathcal{O}(n^3)$. If $\beta$ is a conjugate of $\alpha$, the worst case
complexity of addition in $\mathbb{K}(\alpha, \beta)$ is $\mathcal{O}(n^2)$
while multiplication is $\mathcal{O}(n^4)$ and inversion $\mathcal{O}(n^6)$. If
$f$ and $g$ are two polynomials of degree at most $d$, their $\gcd$ costs
$\mathcal{O}(d^3n^2+n^3d^2)$ operations in $\mathbb{K}(\alpha)$ or
$\mathcal{O}(d^3n^4+n^6d^2)$ if their coefficients live in $\mathbb{K}(\alpha,
\beta)$. Steps $1-3$ of the algorithm cost $K+\mathcal{O}(n^2)$. Step $4$ is
evaluating a polynomial in $\mathbb{K}(\alpha)$, invert the result and multiply
the polynomial this result. By Horner's method it is $\mathcal{O}(n^3)$. Step
$5.b$ can be done in $\mathcal{O}(dmn)$ operations. For a parameter $t_k$ doing
steps $5.c-d$ is evaluating $m$ rational functions in $\mathbb{K}(\alpha)$ and
then compute $m-1$ $\gcd$ in $\mathbb{K}(\alpha, \beta)$ of degree $d$, this
costs $\mathcal{O}(md^3n^4+mn^6d^2)$. From
Theorem~\ref{teo:number_of_univariate_gcd_for_hypercircle} we have to try at
most $\mathcal{O}(d^2+n)$ times, so the total cost is bounded by
$\mathcal{O}(md^5n^7)$.

Computing step $5.f$ is just solving a system of $3$ linear equations in $4$
unknowns in $\mathbb{K}(\alpha, \beta)$. This can be done in $\mathcal{O}(n^4)$
operations. Now, comparing $\psi$ and $\psi^\sigma(u)$ in $5.e$ can be done
evaluating both functions in $\mathcal{O}(d)$ parameters. Each evaluation costs
$\mathcal{O}(n^6)$, so in total, this step can be done in $\mathcal{O}(mdn^6)$.
Step $5.h$ we already have $m(\alpha_i,x)/m(\alpha_i, \alpha_i)$ precomputed by
conjugation, so we only need to multiply the polynomials, which is dominated by
computing $\mathcal{O}(n)$ products ($u$ is always of degree $\leq 1$ and we do
not need to do anything with the denominator). This costs $\mathcal{O}(n^5)$.
Now, instead of computing the minimal polynomial of the pole of $u$, we can
compute its characteristic polynomial over $\mathbb{K}(\alpha)$. Since the
characteristic polynomial of an $n\times n$ matrix can be done in
$\mathcal{O}(n^4)$ operations and the matrix will have entries in
$\mathbb{K}(\alpha)$, we can compute this characteristic polynomial in
$\mathcal{O}(n^6)$ operations. Step $5.j$ can be done in $O(n^6)$ operations.
Hence step $5$ is bounded by $\mathcal{O}(md^5n^7)$. Since we have to perform
step $5$ at most $n$ times. We get a bound of $\mathcal{O}(md^5n^8)$ operations
over $\mathbb{K}$.

If $\C$ is not $\K$-definable. In step $5$ we compute the automorphisms
$\sigma_i$ such that $\C = \C^\sigma$. From this automorphisms, we can compute
the field of definition $\mathbb{L}$ in $\mathcal{O}(n^4)$ operations and repeat
the whole algorithm. It is clear that the running time for the extension
$\mathbb{L} \subseteq \mathbb{K}(\alpha)$ is bounded by the case $\mathbb{K}
\subseteq \mathbb{K}(\alpha)$. So the global bound does not change.
\end{proof}

This result agrees with experimentation, the most important parameter is the
degree of $\alpha$ over $\K$ and the ambient dimension of $\mathcal{C}$ tend to
be not relevant in the algorithm compared to the other parameters.

Computing the hypercircle using Definition~\ref{defn:witness} is too slow,
because we have to work with an ideal in $n$ variables over $\K$ and make the
quotient by the ideal defined by the denominator. In \cite{SV-Quasipolynomial}
the authors proposed a method to compute the parametrization of the hypercircle.
It is based in the following result.

\begin{theorem}\label{teo:Sendra-Villarino}
Let $\psi$ be a proper parametrization of $\C$ with coefficients in
$\mathbb{K}(\alpha)$. Let $G(x_1, \ldots, x_m):\C\rightarrow \F$ be the inverse
of the parametrization. $G\in \mathbb{K}(\alpha)(x_1, \ldots, x_m)$. Write
$G=\sum_{i=0}^{n-1} G_i\alpha^i$, $G_i\in \mathbb{K}(x_1, \ldots, x_m)$, $0\leq
i\leq n-1$. Consider $\phi=(G_0(\psi), \ldots, G_{n-1}(\psi))$. Then $\C$ is
defined over $\mathbb{K}$ if and only if $\phi$ is well defined and parametrizes
a curve in $\mathbb{F}^n$. In this case $\phi$ is the standard parametrization
of the associated hypercircle to $\psi$. 
\end{theorem}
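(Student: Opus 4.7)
The plan is to exploit the fundamental identity $G(\psi(t)) = t$. Expanding $G = \sum_{i=0}^{n-1} G_i \alpha^i$, and noting that each $G_i \in \K(x_1, \ldots, x_m)$ is fixed by Galois conjugation, this identity reads
\[t \;=\; \sum_{i=0}^{n-1} G_i(\psi(t))\,\alpha^i \;=\; \sum_{i=0}^{n-1} \phi_i(t)\,\alpha^i,\]
which is exactly the first row ($u_0(t)=t$) of the Vandermonde system~(\ref{eq:Vandermonde_standard}). The strategy is to derive the remaining rows by Galois-conjugating this identity, and then invoke the uniqueness of the Vandermonde solution to identify $\phi$ with the standard parametrization of the hypercircle.

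For the forward direction I would assume $\C$ is $\K$-definable, so $\C = \C^{\sigma_j}$ for every conjugate automorphism $\sigma_j$. Applying $\sigma_j$ to $G \circ \psi = t$ (which fixes $t$ and each $G_i$) gives $G^{\sigma_j} \circ \psi^{\sigma_j} = t$, so $G^{\sigma_j}$ inverts $\psi^{\sigma_j}$ along $\C^{\sigma_j} = \C$. Since $\psi(t) \in \C$, I can evaluate
\[G^{\sigma_j}(\psi(t)) \;=\; (\psi^{\sigma_j})^{-1}(\psi(t)) \;=\; u_j(t),\]
where the last equality is Theorem~\ref{teo:howto_compute_u}. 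Because $\sigma_j$ sends $\alpha$ to $\alpha_j$ while fixing each $G_i$, the left-hand side equals $\sum_i \phi_i(t)\,\alpha_j^i$, giving the $j$-th row $\sum_i \phi_i(t)\,\alpha_j^i = u_j(t)$ of~(\ref{eq:Vandermonde_standard}). Uniqueness of the Vandermonde solution identifies $\phi$ with the standard parametrization of $\U$, so $\phi$ is well defined and parametrizes a curve in $\F^n$.

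For the converse I plan to argue by contrapositive. The inverse $G$ is only determined modulo the ideal $I_{\K(\alpha)}(\C)$, and the decomposition $G = \sum G_i \alpha^i$ depends on which representative of $G$ one picks. When $\C$ is $\K$-definable, $I_{\K(\alpha)}(\C) = I_\K(\C) \otimes_\K \K(\alpha)$, so two representatives differ by an element of the form $\sum H_i \alpha^i$ with each $H_i \in I_\K(\C)$; consequently $H_i(\psi)\equiv 0$ and $\phi$ is independent of the choice. When $\C$ is not $\K$-definable, $I_{\K(\alpha)}(\C)$ strictly contains $I_\K(\C) \otimes_\K \K(\alpha)$, and one can find some $H = \sum H_i \alpha^i \in I_{\K(\alpha)}(\C)$ whose components $H_i$ are not all in $I_\K(\C)$. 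Replacing $G$ by $G + H$ changes the $G_i$ by $H_i$ with $H_i(\psi)\not\equiv 0$, making $\phi$ depend on the representative and hence ill defined. Alternatively, for any fixed representative the would-be relations $\sum_i \phi_i(t) \alpha_j^i = u_j(t)$ cannot hold for indices $j$ with $\C \neq \C^{\sigma_j}$, since no such isomorphism $u_j$ exists; so $\phi$ cannot trace out the hypercircle.

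The main obstacle is the converse. The forward implication is essentially bookkeeping supported by Theorem~\ref{teo:howto_compute_u} and the uniqueness of~(\ref{eq:Vandermonde_standard}). The converse requires the more delicate analysis of the ambiguity of $G$ modulo $I_{\K(\alpha)}(\C)$, specifically showing that this ambiguity propagates to a nontrivial change in the decomposition $G = \sum G_i \alpha^i$ exactly when $\C$ fails to be $\K$-definable, and that this forces at least one of the two conditions of the statement --- well-definedness of $\phi$ or its parametrizing a curve in $\F^n$ --- to fail.
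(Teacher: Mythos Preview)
The paper does not prove this theorem at all: its entire proof is the single line ``See \cite{SV-Quasipolynomial}''. So there is no in-paper argument to compare against, and your proposal already goes well beyond what the paper provides.

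Your forward direction is sound and is in fact a pleasant internalisation of the paper's own machinery: conjugating the identity $G(\psi(t))=t$, using that each $G_i$ is $\sigma_j$-fixed, and invoking $\C=\C^{\sigma_j}$ together with Theorem~\ref{teo:howto_compute_u} yields precisely the $j$-th row of the Vandermonde system~(\ref{eq:Vandermonde_standard}), whose unique solution is the standard parametrization. This is a self-contained proof the paper could have included instead of the bare citation.

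Your converse, however, has a genuine gap. The phrase ``$\phi$ is well defined'' in the statement does not mean ``independent of the representative of $G$ modulo $I_{\K(\alpha)}(\C)$''; in practice one fixes a specific rational-function representative of the inverse, clears to a $\K$-denominator, and reads off the $G_i$, so $\phi$ is a definite tuple of elements of $\K(\alpha)(t)$. What must fail when $\C$ is not $\K$-definable is that either some $G_i$ has a denominator vanishing identically on $\C$ (so $G_i(\psi)$ is $0/0$), or the image of $\phi$ degenerates to a point. Your representative-ambiguity argument does not establish either of these, and your alternative remark that ``no such $u_j$ exists'' only shows $\phi$ cannot be the standard parametrization of the hypercircle, not that it fails to parametrize \emph{some} curve in $\F^n$. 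A workable route to close the gap is the contrapositive you hint at but do not carry out: if each $\phi_i=G_i(\psi)$ is a bona fide element of $\K(\alpha)(t)$, then $G^{\sigma_j}=\sum_i G_i\alpha_j^i$ restricts to a well-defined rational function on $\C$ with value $\sum_i\phi_i(t)\alpha_j^i$; composing with $\psi^{\sigma_j}$ and using that $G^{\sigma_j}$ inverts $\psi^{\sigma_j}$ on $\C^{\sigma_j}$ forces $\psi^{\sigma_j}\bigl(\sum_i\phi_i\alpha_j^i\bigr)\in\C^{\sigma_j}$ to coincide with $\psi(t)\in\C$ for generic $t$, hence $\C=\C^{\sigma_j}$ for every $j$ and $\C$ is $\K$-definable. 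You would still need to rule out the degenerate case where $\sum_i\phi_i\alpha_j^i$ is constant in $t$, which is where the hypothesis that $\phi$ parametrizes a curve (rather than a point) enters.
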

\begin{proof}
See \cite{SV-Quasipolynomial}
\end{proof}

Algorithm~\ref{alg:main} and the algorithm in Theorem~\ref{teo:Sendra-Villarino}
have been implemented in the Sage CAS \cite{sage-4-6-2}, the code for the method
presented in this paper can be obtained from \cite{sage-hypercircles}. We are
interested in the average case, so we will assume that our curve is planar
(since we can always make a generic projection). However, the method presented
here also performs well for spatial curves. For the method based on the inverse
of the parametrization of \cite{SV-Quasipolynomial} we compute $(G_0, \ldots,
G_{n-1})$ but we do not simplify the composition $G_i(\psi)$. This is done to
avoid artifacts in the running time that appeared if we simplify the
composition. The inverse of $\psi$ is computed using the resultant method
explained in \cite{MR2126926}.

We show the results for random curves of degree $2, 5, 10, 25$ and $50$. First
over an extension of $\mathbb{Q}$ of degree $2$, over a cyclotomic extension of
degree $6$ and a random extension of degree $5$. In all these cases $\C$ is
defined over $\K$.

\begin{center}
\begin{tabular}{|c|r|r|r|r|r|}
\hline
\multicolumn{6}{|c|}{Case: $\alpha^2+1=0$}\\
\hline
method $\backslash$ degree of $\C$ & 2 & 5 & 10 & 25 & 50\\
\hline
Moving hyperplanes: & 0.08 & 0.15 & 0.27 & 0.87 & 2.58\\
Inverse-based method: & 0.03 & 0.15 & 13.16 & $>60$ &\\
\hline
\multicolumn{6}{c}{}\\
\hline
\multicolumn{6}{|c|}{Case:
$\alpha^6+\alpha^5+\alpha^4+\alpha^3+\alpha^2+\alpha+1=0$}\\
\hline
Moving hyperplanes: & 1.12 & 2.00 & 3.71 & 13.13 & 48.01\\
Inverse-based method: & 0.13 & 28.61 & $>60$ & &\\
\hline
\multicolumn{6}{c}{}\\
\hline
\multicolumn{6}{|c|}{Case: $\alpha$ of degree 5, random minimal polynomial}\\
\hline
Moving hyperplanes: & 1.04 & 2.01 & 3.84 & 14.28 & $>60$\\
Inverse-based method: & 0.12 & 10.36 & $>60$ & &\\
\hline
\end{tabular}
\end{center}

Now, we show a table with a random extension of degree $5$ but $\C$ is not
defined over $\K$. In this case is more evident that the moving hyperplanes
method is better. With high probability it will detect that the curve is not
defined over $\K$ while trying to compute the automorphisms $u(t)$, on the
other hand, the inverse-based method always has to compute the inverse of the
parametrization $\psi$. In all cases, our algorithm computed the
minimum field of definition of the corresponding curve.

\begin{center}
\begin{tabular}{|c|r|r|r|r|r|}
\hline
\multicolumn{6}{|c|}{Case: $\alpha$ of degree 5, $\C$ not defined over $\K$}\\
\hline
method $\backslash$ degree of $\C$ & 2 & 5 & 10 & 25 & 50\\
\hline
Moving hyperplanes: & 0.36 & 0.59 & 0.91 & 2.16 & 5.13\\
Inverse-based method: & 0.08 & 12.68 & $>60$& & \\
\hline
\end{tabular}
\end{center}

\section*{Acknowledgements}
The author is supported by the Spanish ``Ministerio de Ciencia e Innovaci\'on" project MTM2008-04699-C03-03 and the work was completed during a stay of the author in the Mittag-Leffler 2011 spring program.

% 
% \bibliographystyle{plain}
% 
% \bibliography{referencias}
\end{document}